\newcommand{\bbC}{\mathbb{C}}
\newcommand{\bbN}{\mathbb{N}}
\newcommand{\bbQ}{\mathbb{Q}}
\newcommand{\bbR}{\mathbb{R}}
\newcommand{\bbT}{\mathbb{T}}
\newcommand{\bbZ}{\mathbb{Z}}
\DeclareMathOperator{\one}{\mathbbm{1}}
\newcommand{\argument}{\mathord{\,\cdot\,}}
\newcommand{\norm}[1]{\left\lVert #1 \right\rVert}
\newcommand{\modulus}[1]{\left\lvert #1 \right\rvert}
\DeclareMathOperator{\dom}{dom}
\newcommand{\spec}{\sigma}
\newcommand{\Res}{\mathcal{R}}
\newcommand{\pnt}{{\operatorname{pnt}}}
\newcommand{\pntSpec}{\spec_{\pnt}}
\theoremstyle{definition}
\newtheorem{definition}{Definition}[section]
\newtheorem{construction}[definition]{Construction}
\newtheorem{remark}[definition]{Remark}
\theoremstyle{plain}
\newtheorem{proposition}[definition]{Proposition}
\newtheorem{theorem}[definition]{Theorem}
\numberwithin{equation}{section}
\begin{document}

\title[The spectrum of irreducible operators]{A note on the spectrum of irreducible operators and semigroups}
\author{Jochen Gl\"uck}
\address{Jochen Gl\"uck, Universität Passau, Fakultät für Informatik und Mathematik, 94032 Passau, Germany}
\email{jochen.glueck@uni-passau.de}
\subjclass[2010]{47B65; 47A10}
\keywords{Irreducible operator; irreducible operator semigroup; peripheral spectrum; cyclicity}
\dedicatory{Dedicated with great pleasure to Rainer Nagel on the occasion of his 80th birthday}
\date{\today}
\begin{abstract}
	Let $T$ denote a positive operator with spectral radius $1$ on, say, an $L^p$-space.
	A classical result in infinite dimensional Perron--Frobenius theory says that,
	if $T$ is irreducible and power bounded, 
	then its peripheral point spectrum is either empty or a subgroup of the unit circle.
	
	In this note we show that the analogous assertion for the entire peripheral spectrum fails.
	More precisely, for every finite union $U$ of finite subgroups of the unit circle
	we construct an irreducible stochastic operator on $\ell^1$
	whose peripheral spectrum equals $U$.
	
	We also give a similar construction for the $C_0$-semigroup case.
\end{abstract}

\maketitle

\section{Introduction} \label{section:introduction}

\subsection*{Main result}

Consider a positive operator $T$ with spectral radius $1$ on an $L^p$-space
($p \in [1,\infty]$) or, more generally, on a complex Banach lattice $E$. 
Assume that $T$ is power-bounded (i.e., that $\sup_{n \in \bbN_0} \norm{T^n} < \infty$),
has spectral radius $1$ and is 
irreducible (i.e., $T$ does not leave any closed ideal invariant,
except for $\{0\}$ and $E$; see e.g.\ \cite[Definition~4.2.1(1)]{Meyer-Nieberg1991}).
Then the \emph{peripheral point spectrum} of $T$
-- i.e., the set of all eigenvalues of $T$ of modulus $1$ --
is either empty or a subgroup of the complex unit circle $\bbT$.
This result, which is a strong generalisation 
of the corresponding Perron--Frobenius result in finite dimensions, 
was proved by Lotz \cite[Theorem~5.2(2)]{Lotz1968}
(for an English presentation of this result
see, for instance, \cite[Theorem~V.5.2 and Lemma~V.4.8]{Schaefer1974}).

For the \emph{peripheral spectrum} of $T$ 
-- i.e., the set of all spectral values of modulus $1$ --
no such result is known on general Banach lattices 
(but see Remark~\ref{rem:continuous-functions} 
for spaces of continuous functions). 
And indeed, the purpose of this note is to demonstrate 
that the peripheral spectrum of an irreducible operator 
need not be a subgroup of the unit circle, in general.
More precisely, we show:

\begin{theorem}
	\label{thm:main-result}
	Let $U \not= \emptyset$ be a finite union of finite subgroups of the complex unit circle.
	Then there exists an irreducible stochastic operator $T$ on $\ell^1$
	with peripheral spectrum $U$.
\end{theorem}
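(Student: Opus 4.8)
The plan is to realise each finite subgroup separately as the peripheral spectrum of a random walk, to glue finitely many such walks into a single irreducible chain by a finite-rank modification, and to control the resulting peripheral spectrum through essential-spectrum stability together with Lotz's theorem.

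\emph{Building blocks.} Write $U=\bigcup_{j=1}^{m}G_{d_j}$, where $G_d\subseteq\bbT$ is the group of $d$-th roots of unity; since every subgroup contains $1$, I may assume $d_j\ge 2$ for all $j$ (the remaining case $U=\{1\}$ being handled by a single aperiodic walk). For $d\ge2$, consider on $\ell^1(\bbZ)$ the transition operator $T_d$ of the random walk with increments $+1$ and $1-d$, each of probability $\tfrac12$; this is a positive, stochastic and irreducible operator (both signs of increment occur and their span is $\bbZ$) of period $d$ (all increments are $\equiv 1\pmod d$). Being a convolution operator on the Wiener algebra $\ell^1(\bbZ)$, it has spectrum equal to the range of its symbol $a_d(\theta)=\tfrac12 e^{i\theta}(1+e^{-id\theta})$ and has no eigenvalues; hence $\spec(T_d)=\specEss(T_d)=a_d(\bbT)$. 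As $|a_d(\theta)|=|\cos(d\theta/2)|$ equals $1$ exactly when $\theta\in\frac{2\pi}{d}\bbZ$, where $a_d$ takes precisely the values in $G_d$, one gets $\spec(T_d)\cap\bbT=G_d$, a curve inside $\overline{\bbD}$ meeting $\bbT$ only in $G_d$.

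\emph{Gluing.} Form the direct sum $T_0=\bigoplus_{j=1}^{m}T_{d_j}$ on $\ell^1(\bbZ\times\{1,\dots,m\})\cong\ell^1$; then $\specEss(T_0)=\spec(T_0)=\bigcup_j a_{d_j}(\bbT)$, so in particular $\spec(T_0)\cap\bbT=U$. This operator is reducible. I then modify $T_0$ at the finitely many base points $0_1,\dots,0_m$, rerouting a fixed fraction of the outgoing mass of $0_j$ to $0_{j+1}$ (indices mod $m$), so as to keep the operator positive and stochastic while making the underlying graph strongly connected; call the result $T$. Because only finitely many columns are altered, $K:=T-T_0$ has finite rank. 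Fredholmness and index are stable under finite-rank (indeed compact) perturbations, so $\specEss(T)=\specEss(T_0)$; thus $U\subseteq\spec(T)$ and $\specEss(T)\cap\bbT=U$. Moreover, for $\lambda\in\bbT\setminus U$ the operator $T-\lambda$ is Fredholm of index $0$ (its index equals that of $T_0-\lambda$, a sum of indices of invertible Laurent operators), so on $\bbT\setminus U$ membership in $\spec(T)$ is equivalent to $\lambda$ being an eigenvalue.

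\emph{No peripheral eigenvalues.} The operator $T$ is positive, power-bounded (being stochastic) and irreducible with $\spec(T)\ni 1$, hence of spectral radius $1$, so Lotz's theorem applies and $\pntSpec(T)\cap\bbT$ is empty or a subgroup of $\bbT$. Since every subgroup of $\bbT$ contains $1$, it suffices to show that $1$ is not an eigenvalue of $T$, i.e.\ that the chain has no summable invariant vector: then $\pntSpec(T)\cap\bbT=\emptyset$, and combining with the previous step gives $\spec(T)\cap\bbT=\specEss(T)\cap\bbT=U$. To rule out a nonzero $x\in\ell^1$ with $Tx=x$, I use $x=T^n x$ and $|x_i|\le\sum_j p^{(n)}(j\to i)\,|x_j|$, where $p^{(n)}$ denotes the $n$-step transition probabilities; as the combined chain is irreducible but not positive recurrent, $p^{(n)}(j\to i)\to 0$ for all $i,j$, and dominated convergence (with dominating function $|x|\in\ell^1$) forces $x_i=0$.

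\emph{The obstacle.} The step requiring the most care is precisely the non-positive-recurrence of the glued chain (equivalently $p^{(n)}\to 0$, equivalently the absence of a summable invariant vector). Each block is a spatially homogeneous walk that is transient (if $d_j\ne 2$) or null recurrent (if $d_j=2$), hence escapes every finite set with infinite expected return time; what must be verified is that attaching finitely many cross-block bonds cannot create positive recurrence. I would establish this either by a current/flux computation on a half-block lying beyond all base points -- where the invariance equation reduces to the free recurrence $x_i=\tfrac12 x_{i-1}+\tfrac12 x_{i+d_j-1}$, whose only summable solution is $0$ -- or by a direct comparison of return times with those of the free walk. Everything else (positivity, stochasticity and irreducibility of $T$, and the symbol computation) is routine.
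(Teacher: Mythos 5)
Your construction is genuinely different from the paper's. The paper takes a \emph{single} finite permutation matrix $P$ with spectrum $U$, forms an infinite direct sum of shrinking copies $(1-q_n)P$, and glues them with a rank-two perturbation $e\otimes q+q\otimes e$; the whole point of that design is that the Sherman--Morrison formula then lets one compute $\langle q,\Res(\lambda,S)q\rangle$ explicitly and verify by hand that every $\lambda\in\bbT\setminus U$ stays in the resolvent set. You instead take one \emph{infinite} translation-invariant block per subgroup (a periodic random walk on $\bbZ$, whose spectrum is read off from its symbol via Wiener's lemma), glue finitely many of them by a finite-rank modification, and control $\bbT\setminus U$ by the compactness toolbox: invariance of the essential spectrum, Fredholmness of index $0$, and then Lotz's theorem to reduce everything to showing that $1$ is not an eigenvalue, i.e.\ that the glued chain is not positive recurrent. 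The paper's route is elementary and self-contained (no Fredholm theory, no Lotz, no recurrence theory); yours outsources the hard analysis to standard machinery but leaves one genuinely probabilistic step to verify. Your symbol computation, the identification $\spec(T_{d})\cap\bbT=G_{d}$, the essential-spectrum stability, and the reduction via $T\lvert x\rvert=\lvert x\rvert$ to positive recurrence are all correct.

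The one step you flag as delicate is indeed where the gap sits, and the first repair you sketch does not work as stated: the free recurrence $x_i=\tfrac12 x_{i-1}+\tfrac12 x_{i+d-1}$ has characteristic polynomial $r^{d}-2r+1=(r-1)(r^{d-1}+\dots+r-1)$, and for $d\ge 3$ the second factor has a root $\rho\in(0,1)$; hence $x_i=\rho^{\,i}$ is a nonzero, \emph{positive}, summable solution on the right half-block, so it is false that the only summable solution beyond the base points is $0$. The argument must instead exploit the \emph{left} half-block: there summability requires a root of modulus $>1$, and for a nonnegative solution the zero-flux identity $\pi_{-N}=\sum_{k=1}^{d-1}\pi_{-N+k}$ forces $(\pi_{-N})_N$ to be nondecreasing in $N$, hence identically zero if summable. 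Equivalently, your second suggested route goes through directly: each block with $d_j\ge 3$ has strictly negative drift, so the glued chain is transient (from $0_j$ there is positive probability of drifting to $-\infty$ inside block $j$ and never returning), while a block with $d_j=2$ is the simple symmetric walk, whose excursions give $0_j$ infinite expected return time; in either case the chain is not positive recurrent and the proof closes. So the strategy is sound and completable, but the half-line recurrence claim needs to be replaced by the one-sided (or return-time) argument just described.
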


By \emph{stochastic} we mean that, for each $0 \le f \in \ell^1$,
we have $Tf \ge 0$ and $\norm{Tf} = \norm{f}$.
In order to prove the theorem, 
we explicitly construct an operator $T$ with the desired properties
in Section~\ref{section:proof-of-main-result}.
An analogous result for $C_0$-semigroups rather than for single operators
is given in Section~\ref{section:semigroup-version}.

\begin{remark}
	\label{rem:continuous-functions}
	Complementary to Theorem~\ref{thm:main-result}, there is a positive result 
	on spaces of continuous functions:
	consider the space $C(K)$ of continuous scalar-valued functions 
	on a compact Hausdorff space $K$; 
	if $T$ is a positive and irreducible operator on $C(K)$ 
	and if the constant function $\one$ is a fixed vector of $T$,
	then the peripheral spectrum of $T$ is indeed a subgroup of the complex unit circle $\bbT$.
	This was proved by Schaefer in \cite[Theorem~7]{Schaefer1968}.
\end{remark}

\subsection*{Related results and literature}

For an overview of classical Perron--Frobenius type results on 
infinite-dimensional Banach lattices we refer to the book chapters
\cite[Sections~V.4 and~V.5]{Schaefer1974} or \cite[Chapter~4]{Meyer-Nieberg1991}, 
or to the survey article \cite{Grobler1994}.

If one considers merely positive rather than irreducible operators,
there is no reason to expect the peripheral (point) spectrum to be a subgroup
of the unit circle -- consider, for instance, 
the direct sum of two cyclic permutation matrices in dimensions $2$ and $3$.
However, there is a weaker notion of symmetry that is satisfied 
by the peripheral spectrum of many positive operators:
we call a subset of the unit circle $\bbT$ \emph{cyclic} 
if it is a union of subgroups of $\bbT$.
It was shown by Krieger \cite[Folgerungen~2.2.1(b) and~2.2.2(b)]{Krieger1969}
and Lotz \cite[Theoreme~4.7, 4.9 and~4.10]{Lotz1968}
that, under certain growth assumptions, 
the peripheral spectrum of a positive operator on a Banach lattices is cyclic
(the results can be found in English 
in \cite[Theorem~V.4.9 and its corollary]{Schaefer1974}).
Whether the aforementioned growth assumptions can be dropped
is a long open problem in spectral theory.
A recent overview of this problem, along with several partial results, 
can be found in \cite{Glueck2018}.

The cyclicity result of Krieger and Lotz also explains the assumed form
of the set $U$ in Theorem~\ref{thm:main-result}:
the positivity of $T$ (together with its contractivity) already implies
the the peripheral spectrum must be a union of subgroups of $\bbT$.

The related question whether the peripheral \emph{point} spectrum
of a positive operator is cyclic, is very subtle, 
and its answer depends on the precise properties of the operator under consideration
as well as on the geometric properties of the underlying space.
For various results and counterexamples, as well as for further references,
we refer to \cite[Sections~5 and~6]{Glueck2016}.

\subsection*{Notation and terminology}

We use the convention $\bbN := \{1,2,3\dots\}$;
the complex unit circle is denoted by $\bbT$.

We call a bounded linear operator $T$ on a complex Banach lattice $E$ 
\emph{positive} if $Tf \ge 0$ for each $0 \le f \in E$.
If $\lambda \in \bbC$ is in the resolvent set of a linear operator $T$,
we denote the resolvent of $T$ at $\lambda$ by $\Res(\lambda,T) := (\lambda - T)^{-1}$.

\section{Proof of the main result} 
\label{section:proof-of-main-result}

Let $G_1, \dots, G_n \subseteq \bbT$ be finite subgroups such that
$U = G_1 \cup \dots \cup G_n$.
We denote the cardinality of $G_k$ by $d_k$ and we set $d := d_1 + \dots + d_n$. 
It is very easy to construct a finite (column) stochastic matrix with spectrum $U$: 
for each $k \in \{1,\dots,n\}$ let $P_k \in \bbR^{d_k \times d_k}$ 
denote a cyclic permutation matrix. 
Then the spectrum of $P_k$ is $G_k$, so the permutation matrix
\begin{align*}
	P := P_1 \oplus \dots \oplus P_n \in \bbR^{d \times d}
\end{align*}
has spectrum $U$.
The point here is, of course, that $P$ is not irreducible.
In order to get an irreducible operator, 
we now take a direct sum of infinitely many copies of $P$ and slightly perturb it.
The main difficulty is then to check that the perturbed operator 
does not have spectral values in $\bbT \setminus U$. 
Here is the detailed construction:

\begin{construction}
	\label{construction:single-operator}
	\emph{The space:} 
	Endow $\bbC^d$ with the $1$-norm $\norm{\argument}_1$
	and consider the space
	\begin{align*}
		E := \{f = (f_0, f_1, f_2, \dots): \; f_0 \in \bbC, \; f_1,f_2, \dots \in \bbC^d \text{ and } \norm{f}_E < \infty\},
	\end{align*}
	where
	\begin{align*}
		\norm{f}_E := \modulus{f_0} + \sum_{n=1}^\infty \norm{f_n}_1.
	\end{align*}
	Obviously, $E$ is isometrically lattice isomorphic to $\ell^1$.
	
	\emph{The operator:}
	Choose a sequence of numbers $(q_n)_{n \in \bbN}$ in $(0, \frac{1}{2}]$
	such that 
	$$
		\sum_{n=1}^\infty dq_n = 1.
	$$
	Clearly, we have $q_n \to 0$ as $n \to \infty$.
	
	We use the symbol $\one \in \bbR^d$ to denote the vector in $\bbR^d$ 
	whose entries are all equal to $1$,
	and we consider the vectors 
	\begin{align*}
		e = (1,0,0,\dots) \in E 
		\qquad \text{and} \qquad 
		q = (0, q_1\one, q_2\one, \dots) \in E
	\end{align*}
	Both vectors $e$ and $q$ have norm $1$.
	
	Let us define the operator $T: E \to E$ by the formula
	\begin{align*}
		T
		\begin{pmatrix}
			f_0 \\
			f_1 \\ 
			f_2 \\
			\vdots
		\end{pmatrix}
		=
		\begin{pmatrix}
			\sum_{n=1}^\infty q_n \langle \one, f_n\rangle \\
			(1-q_1)P f_1  +  f_0 q_1 \one \\
			(1-q_2)P f_2  +  f_0 q_2 \one \\
			\vdots
		\end{pmatrix}.
	\end{align*}
	Note that we can write $T$ in the form $T = S + e \otimes q + q \otimes e$,
	where the operator $S: E \to E$ has the block diagonal form
	\begin{align*}
		S = 
		\begin{pmatrix}
			0 &          &          &        \\
			  & (1-q_1)P &          &        \\
			  &          & (1-q_2)P &        \\
			  &          &          & \ddots
		\end{pmatrix},
	\end{align*}
	and where the rank-$1$ operators $e \otimes q$ and $q \otimes e$ are given by
	\begin{align*}
		(e \otimes q)f := \langle e, f \rangle q 
		\qquad \text{and} \qquad
		(q \otimes e) f := \langle q, f \rangle e
	\end{align*}
	for all $f \in E$ 
	(note that this makes sense since both sequences $e$ and $q$ 
	can be seen as elements of $E \simeq \ell^1$ and as elements of $E' \simeq \ell^\infty$).
\end{construction}

Now that we have constructed our operator $T$ on $E$,
let us check that it satisfies all the desired properties.
Obviously, $T$ is positive, 
and since all $q_n$ are $> 0$, one readily sees that $T$ is irreducible, too.
The fact that $T$ is stochastic follows from the equality $\sum_{n=1}^\infty d q_n = 1$
and from the fact that the matrix $P$ is (column) stochastic.

It is also not difficult to see that each number in $U$ is a spectral value of $T$:

\begin{proposition}
	\label{prop:approximate-eigenvalue}
	Let $\lambda \in U$. 
	Then $\lambda$ is an approximate eigenvalue of $T$.
\end{proposition}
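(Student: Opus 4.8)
The plan is to exhibit, for each $\lambda \in U$, an explicit sequence of approximate eigenvectors by exploiting that $\lambda$ is a genuine eigenvalue of the permutation matrix $P$ and that the coupling which ties the blocks together is governed by the coefficients $q_n$, which tend to $0$. Since $U = G_1 \cup \dots \cup G_n$ and each $G_k$ is the spectrum of the cyclic permutation matrix $P_k$, the value $\lambda$ lies in some $G_k$ and is therefore an eigenvalue of $P = P_1 \oplus \dots \oplus P_n$. First I would fix an eigenvector $v \in \bbC^d$ of $P$ for the eigenvalue $\lambda$, normalised so that $\norm{v}_1 = 1$.

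For each $N \in \bbN$ I would then define the test vector $g^{(N)} \in E$ that is equal to $v$ in its $N$-th block and zero in every other coordinate (in particular $g^{(N)}_0 = 0$); by construction $\norm{g^{(N)}}_E = \norm{v}_1 = 1$. Applying $T$ to $g^{(N)}$ and reading off the block structure from Construction~\ref{construction:single-operator}, the $N$-th block of $T g^{(N)}$ equals $(1-q_N) P v = (1-q_N)\lambda v$, the zeroth coordinate equals $q_N \langle \one, v\rangle$, and all remaining coordinates vanish. Hence $(T - \lambda) g^{(N)}$ has only two nonzero entries: its $N$-th block is $-q_N \lambda v$ and its zeroth coordinate is $q_N \langle \one, v\rangle$.

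It then remains to estimate the norm. Since $\modulus{\lambda} = 1$ and $\norm{v}_1 = 1$, and since $\modulus{\langle \one, v\rangle} \le \norm{v}_1 = 1$, one reads off
\begin{align*}
	\norm{(T-\lambda)g^{(N)}}_E = q_N \modulus{\langle \one, v\rangle} + q_N \norm{v}_1 \le 2 q_N .
\end{align*}
Because $q_N \to 0$ as $N \to \infty$, the unit vectors $g^{(N)}$ satisfy $\norm{(T-\lambda)g^{(N)}}_E \to 0$, which is precisely the assertion that $\lambda$ is an approximate eigenvalue of $T$.

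I do not expect a serious obstacle here: the argument is essentially a one-line perturbation estimate. The only points requiring care are the bookkeeping of the block action of $T$ on a vector supported in a single tail block, and the observation that pushing the support out to infinity shrinks the coupling factor $q_N$ -- the sole source of error -- to zero. The mechanism is conceptually the same as for an infinite direct sum of copies of $P$, where the genuine eigenvectors of $P$ become approximate eigenvectors once the finite-rank perturbations $e \otimes q$ and $q \otimes e$ are switched on.
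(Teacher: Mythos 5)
Your argument is correct and is essentially identical to the paper's own proof: both take a normalised eigenvector of $P$ for $\lambda$, place it in the $N$-th block, and observe that the resulting error $(T-\lambda)g^{(N)}$ consists only of the coupling terms of size $O(q_N) \to 0$. The explicit bound $\norm{(T-\lambda)g^{(N)}}_E \le 2q_N$ is a fine (slightly more quantitative) way to phrase the paper's concluding limit.
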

\begin{proof}
	Clearly, $\lambda$ is a spectral value of the matrix $P$. 
	Let $z \in \bbC^d$ be a corresponding eigenvector of norm $\norm{z}_1 = 1$.
	
	Now we can easily construct an approximate eigenvector for $T$:
	for each $n \in \bbN$, let $z^{(n)} \in E$ be the vector
	that is equal to $z$ at the $n$-th component, and $0$ everywhere else.
	This vector has norm $1$ in $E$.
	
	Consider the vector $Tz^{(n)}$:
	its $0$-th component is $q_n \langle \one, z \rangle$,
	its $n$-th component is $\lambda (1-q_n)z$,
	and all other components are $0$.
	Thus, $(T-\lambda)z^{(n)} \to 0$ in $E$ as $n \to \infty$.
	This proves that $\lambda$ is an approximate eigenvalue of $T$.
\end{proof}

It only remains to show that no other unimodular number is in the spectrum of $T$.
This is a bit more involved since it is, in general, 
not easy to show that a given complex number is not in the spectrum of a given operator.
What will save us in our concrete situation is the formula
$$
	T = S + e \otimes q + q \otimes e,
$$
which says that $T$ is an additive perturbation of the very simple operator $S$
by two rank-$1$ operators;
for rank-$1$ perturbations, there is a variant of the famous Sherman--Morrison formula
which will allow us to retrieve very precise information about the spectrum of $T$.
But first, we analyse the spectrum of the simpler operator $S$:

\begin{proposition}
	\label{prop:spectrum-of-S}
	Let $\lambda \in \bbT \setminus U$.
	Then $\lambda$ is in the resolvent set of $S$
\end{proposition}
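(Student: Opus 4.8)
The plan is to exploit the block-diagonal structure of $S$. Writing $E = \bbC \oplus \bigoplus_{n=1}^\infty \bbC^d$ as an $\ell^1$-direct sum, the operator $\lambda - S$ decomposes as the direct sum of the scalar $\lambda$ (acting on the zeroth component, where $S$ vanishes) together with the finite-dimensional blocks $\lambda - (1-q_n)P$ (acting on the $n$-th copy of $\bbC^d$). For such a block-diagonal operator on an $\ell^1$-direct sum, $\lambda - S$ is invertible in $\mathcal{B}(E)$ if and only if every block is invertible and the block inverses are uniformly norm-bounded; in that case $\Res(\lambda, S)$ is simply the direct sum of the block inverses. Thus the whole problem reduces to establishing a uniform resolvent bound for the blocks.

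The zeroth block is multiplication by $\lambda$, which is invertible because $\modulus{\lambda} = 1 \neq 0$. For the remaining blocks I would factor
\begin{align*}
	\lambda - (1-q_n)P = (1-q_n)\left(\mu_n - P\right), \qquad \mu_n := \frac{\lambda}{1-q_n},
\end{align*}
so that inverting $\lambda - (1-q_n)P$ amounts to inverting $\mu_n - P$ and multiplying by $(1-q_n)^{-1} \le 2$ (recall $q_n \le \tfrac12$). Since $q_n \to 0$ we have $\mu_n \to \lambda$, and by hypothesis $\lambda \in \bbT \setminus U$ lies in the resolvent set of $P$, as $\sigma(P) = U$.

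The main obstacle — and the reason the statement is not entirely trivial — is to bound $\norm{\Res(\lambda,(1-q_n)P)}$ \emph{uniformly} in $n$. The crude Neumann-series estimate only yields $\norm{\Res(\lambda,(1-q_n)P)} \le q_n^{-1}$, which blows up as $n \to \infty$ and is therefore useless here. Instead I would use that the resolvent map $\mu \mapsto \Res(\mu, P)$ is continuous (indeed analytic) on the open resolvent set $\bbC \setminus U$. Since $\mu_n \to \lambda$ and $\lambda$ lies in this open set, all but finitely many $\mu_n$ remain in a fixed compact neighbourhood of $\lambda$ contained in $\bbC \setminus U$, on which $\norm{\Res(\argument,P)}$ is bounded by some constant $C$; hence $\norm{\Res(\lambda,(1-q_n)P)} = (1-q_n)^{-1}\norm{\Res(\mu_n,P)} \le 2C$ for all large $n$. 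The finitely many remaining blocks are each invertible with a finite inverse norm, so taking the maximum yields the desired uniform bound, and $\lambda - S$ is invertible. Concretely, the same bound can be read off by diagonalising the permutation matrix $P = VDV^{-1}$ with $D$ carrying the unimodular eigenvalues in $U$: the diagonal entries $(\lambda-(1-q_n)u)^{-1}$ are controlled by $\dist(\lambda,U)^{-1}$ as soon as $q_n < \tfrac12\dist(\lambda,U)$, while the fixed factor $\norm{V}\,\norm{V^{-1}}$ absorbs the change of basis. Either way one concludes that $\lambda$ lies in the resolvent set of $S$.
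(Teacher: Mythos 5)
Your proof is correct and takes essentially the same route as the paper: reduce to a uniform bound on the block resolvents $\Res(\lambda,(1-q_n)P)$ and obtain it from continuity of $\mu \mapsto \Res(\mu,P)$ on a compact set near $\lambda$ avoiding $\spec(P)=U$. The paper simply bounds the resolvent on the whole segment $\{r\lambda : r \in [1,2]\}$ at once instead of splitting off finitely many indices, which is only a cosmetic difference.
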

\begin{proof}
	Clearly, $\lambda$ is in the resolvent set of $(1-q_n)P$ for each $n$.
	Due to the block diagonal structure of $S$, 
	it suffices to prove that the resolvents $\Res\big(\lambda,(1-q_n)P\big)$
	are uniformly bounded as $n$ varies.
	
	Fortunately, this is easy:
	since $\lambda$ is not in the spectrum of $P$, 
	there exists a constant $M \ge 0$ such that
	\begin{align*}
		\norm{\Res(r \lambda, P)} \le M
	\end{align*}
	for all $r \in [1,2]$. 
	Since all $q_n$ are in the interval $(0,\frac{1}{2}]$, this yields
	\begin{align*}
		\norm{\Res\big(\lambda, (1-q_n)P\big)}
		= \frac{1}{1-q_n} \norm{\Res\Big(\frac{\lambda}{1-q_n}, P\Big)} \le 2M
	\end{align*}
	for all $n \in \bbN$.
\end{proof}

In order to show that the perturbation by 
$e \otimes q$ and $q \otimes e$ does not destroy the spectral properties
on the unit circle, we now use the following version 
of the Sherman--Morrison formula:

\begin{proposition}
	\label{prop:sherman-morrison-woodbury}
	Let $A: X \supseteq \dom{A} \to X$ be a closed linear operator on a complex Banach space $X$,
	let $w \in X$ and $\varphi \in X'$.
	Let $\lambda \in \bbC$ be in the resolvent set of $A$.
	
	Then $\lambda$ is in the resolvent set of $A + \varphi \otimes w$
	if and only if $\langle \varphi, \Res(\lambda,A) w\rangle \not= 1$;
	in this case, the resolvent of $A + \varphi \otimes w$ at $\lambda$
	is given by the formula
	\begin{align}
		\label{eq:prop:sherman-morrison-woodbury:formula}
		\Res(\lambda, A + \varphi \otimes w)
		=
		\Res(\lambda, A) + 
		\frac{1}{1 - \langle \varphi, \Res(\lambda,A) w\rangle}
		\Res(\lambda, A) (\varphi \otimes w) \Res(\lambda,A).
	\end{align}
\end{proposition}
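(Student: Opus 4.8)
The plan is to solve the resolvent equation for $A + \varphi \otimes w$ directly and to read off everything from a single scalar condition. Fix $y \in X$ and consider the equation $(\lambda - A - \varphi \otimes w)x = y$ for $x \in \dom A$, which by the definition of $\varphi \otimes w$ is the same as $(\lambda - A)x - \langle \varphi, x\rangle w = y$. Since $\lambda$ is in the resolvent set of $A$, the operator $\lambda - A$ is a bijection from $\dom A$ onto $X$ with bounded inverse $R := \Res(\lambda, A)$; applying $R$ therefore turns the equation into $x = R y + \langle \varphi, x\rangle R w$. The only unknown scalar here is $\langle \varphi, x\rangle$, and applying the functional $\varphi$ to both sides reduces the whole problem to the single scalar identity
\begin{align*}
	\big(1 - \langle \varphi, R w\rangle\big)\,\langle \varphi, x\rangle = \langle \varphi, R y\rangle.
\end{align*}

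Everything is now governed by whether the factor $1 - \langle \varphi, R w\rangle$ vanishes. If $\langle \varphi, R w\rangle \neq 1$, then $\langle \varphi, x\rangle$ is forced to equal $\langle \varphi, R y\rangle / (1 - \langle \varphi, R w\rangle)$, and substituting this back produces the unique candidate $x = R y + \frac{\langle \varphi, R y\rangle}{1 - \langle \varphi, R w\rangle} R w$. I would then verify by a direct computation, using $(\lambda - A)R = \id$ and $R(\varphi \otimes w)R y = \langle \varphi, R y\rangle R w$, that this $x$ lies in $\dom A$ (automatic, since $R$ maps into $\dom A$) and genuinely solves the equation, and that the resulting assignment $y \mapsto x$ is exactly the bounded operator on the right-hand side of \eqref{eq:prop:sherman-morrison-woodbury:formula}. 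Since the uniqueness of $\langle \varphi, x\rangle$ forces uniqueness of $x$, this shows that $\lambda - A - \varphi \otimes w$ is a bijection whose inverse is the stated bounded operator, i.e.\ $\lambda$ lies in the resolvent set and the formula holds.

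It remains to treat the converse, i.e.\ the case $\langle \varphi, R w\rangle = 1$. Here I would show that $\lambda - A - \varphi \otimes w$ fails to be injective by exhibiting a nonzero kernel vector. First note that $w \neq 0$, since otherwise $\langle \varphi, R w\rangle = 0 \neq 1$; as $R$ is injective this gives $R w \neq 0$. A one-line computation then shows $(\lambda - A - \varphi \otimes w)R w = (\lambda - A)R w - \langle \varphi, R w\rangle w = w - w = 0$, so $R w$ is a nonzero element of the kernel and $\lambda$ is not in the resolvent set of $A + \varphi \otimes w$. Combining the two cases yields the claimed equivalence.

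The computations are entirely routine once the scalar identity above is isolated; the only points demanding genuine care are the domain bookkeeping for the (possibly unbounded) closed operator $A$ -- which is handled automatically by the fact that $\Res(\lambda, A)$ takes values in $\dom A$ -- and checking both directions of the equivalence rather than just the forward implication.
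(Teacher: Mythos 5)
Your proof is correct and complete. Note that the paper does not actually prove this proposition: it quotes it from the literature (Foguel for bounded $A$, Arendt--Batty and Daners--Gl\"uck for the unbounded case), so there is no in-paper argument to compare against. Your argument -- reducing the resolvent equation $(\lambda - A - \varphi \otimes w)x = y$ to the scalar identity $(1 - \langle \varphi, Rw\rangle)\langle \varphi, x\rangle = \langle \varphi, Ry\rangle$ and splitting on whether the factor vanishes -- is the standard derivation and handles both directions of the equivalence cleanly; in particular the kernel vector $Rw$ in the degenerate case and the domain bookkeeping via $R = \Res(\lambda,A)$ mapping into $\dom A$ are exactly the points that need care, and you address both. (A minor shortcut: in the case $\langle \varphi, Rw\rangle = 1$ you can conclude $Rw \neq 0$ directly from $\langle \varphi, Rw\rangle = 1 \neq 0$, without the detour through $w \neq 0$ and injectivity of $R$.)
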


The reason why we formulated the proposition
for closed rather than merely for bounded operators is 
that we will also employ this result for semigroup generators in the subsequent section.

In finite dimensions (and for $\lambda = 0$) Proposition~\ref{prop:sherman-morrison-woodbury}
is a classical result in matrix analysis; 
for an historical overview we refer to \cite[Section~1]{Hager1989}.
In infinite dimensions the proposition can, for bounded operators $A$, be
found in \cite[Theorem~1.1]{Foguel1960}; for the case of unbounded $A$
we refer to \cite[Lemma~1.1]{ArendtBatty2006} or \cite[Proposition~A.1]{DanersGlueck2018}. 

We will now use Proposition~\ref{prop:sherman-morrison-woodbury}
to show that $T$ has no spectral values in $\bbT \setminus U$.
Since $T$ is a rank-$2$ perturbation of $S$, 
we have to employ the proposition twice.
To perform all the necessary computations, 
the following simple observation about complex numbers is useful:

\begin{proposition}
	\label{prop:circles}
	Let $p \in [0,1)$. 
	For every complex $\lambda \in \bbT \setminus \{1\}$ we have
	\begin{align*}
		p < \modulus{\lambda - 1 + p}.
	\end{align*}
\end{proposition}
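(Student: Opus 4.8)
The plan is to prove the strict inequality $p < \modulus{\lambda - 1 + p}$ by squaring both sides, which is harmless since both quantities are nonnegative, and then reducing everything to the real part of $\lambda$. Writing $\lambda = x + iy$ with $x^2 + y^2 = 1$, I would expand
\begin{align*}
	\modulus{\lambda - 1 + p}^2 = (x - 1 + p)^2 + y^2.
\end{align*}
Using $y^2 = 1 - x^2$, this simplifies to a quadratic expression in $x$ and $p$; subtracting $p^2$ should leave something that factors nicely. The key algebraic observation I expect to fall out is that $\modulus{\lambda - 1 + p}^2 - p^2 = 2(1-p)(1 - x)$, where $x = \re \lambda$.

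With that identity in hand, the inequality becomes transparent. Since $p \in [0,1)$ we have $1 - p > 0$, and since $\lambda \in \bbT \setminus \{1\}$ we have $\re \lambda < 1$, so $1 - x > 0$ as well. Hence the product $2(1-p)(1-x)$ is strictly positive, which gives $\modulus{\lambda - 1 + p}^2 > p^2$ and therefore $p < \modulus{\lambda - 1 + p}$, as desired.

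There is essentially no serious obstacle here; the statement is elementary. The only point requiring a moment's care is verifying that $\re \lambda < 1$ strictly whenever $\lambda \in \bbT \setminus \{1\}$: this holds because the only point of the closed unit disk (and hence of $\bbT$) with real part equal to $1$ is $\lambda = 1$ itself, which is excluded by hypothesis. Geometrically, the inequality just records that the point $1 - p$ lies strictly inside the disk of radius $p$ centred at $\lambda$ precisely when $\lambda \neq 1$; I would mention this picture only briefly, since the algebraic computation above is already complete and self-contained.
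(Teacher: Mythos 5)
Your proof is correct. It takes a different (though equally elementary) route from the paper: the paper disposes of the statement with a one-line geometric observation --- the circle $\bbT - 1 + p$, centred at $-1+p$, is internally tangent to the circle $p\bbT$ and touches it only at the point $p$, so every point of $\bbT - 1 + p$ other than $p$ has modulus strictly greater than $p$ --- whereas you carry out the algebra explicitly. Your key identity
\begin{align*}
	\modulus{\lambda - 1 + p}^2 - p^2 = 2(1-p)(1 - \re \lambda)
\end{align*}
checks out (expand $(x-1+p)^2 + y^2$ with $x^2+y^2=1$), and the two factors are strictly positive precisely because $p<1$ and $\lambda \in \bbT\setminus\{1\}$ forces $\re\lambda < 1$. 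What your version buys is a fully self-contained verification that does not ask the reader to picture the tangency of the two circles; what the paper's version buys is brevity and an explanation of \emph{why} the inequality holds (the equality case $\lambda = 1$ is exactly the tangency point). Both arguments cover the boundary case $p=0$ without further comment.
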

\begin{proof}
	It is easy to see this geometrically:
	the circle $\bbT - 1 + p$ (which is centered at $-1+p$) 
	is located outside the circle $p \bbT$,
	and they only intersect in the point $p$.
\end{proof}

Now, finally, we can conclude the proof of our main result, Theorem~\ref{thm:main-result},
by showing the following proposition:

\begin{proposition}
	\label{prop:resolvent-set-on-unit-circle}
	Let $\lambda \in \bbT \setminus U$. 
	Then $\lambda$ is in the resolvent set of $T$.
\end{proposition}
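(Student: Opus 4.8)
The plan is to exploit the decomposition $T = S + q \otimes e + e \otimes q$ and to treat $T$ as a rank-$2$ perturbation of $S$, applying the Sherman--Morrison formula (Proposition~\ref{prop:sherman-morrison-woodbury}) twice in succession. By Proposition~\ref{prop:spectrum-of-S} we already know that $\lambda$ lies in the resolvent set of $S$, so the first perturbation step can start from $A := S$. The entire argument then reduces to computing two scalars of the form $\langle \varphi, \Res(\lambda, A) w\rangle$ and verifying that each of them differs from $1$; once this is done, formula~\eqref{eq:prop:sherman-morrison-woodbury:formula} yields the resolvent of $T$ at $\lambda$ explicitly, and the proof is complete.

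The computations become explicit through a single observation: since $P$ is a permutation matrix we have $P\one = \one$, and hence, for every $n$,
\begin{align*}
	\Res\big(\lambda, (1-q_n)P\big)\one = \frac{1}{\lambda - 1 + q_n}\,\one .
\end{align*}
Together with $\Res(\lambda, S)e = \lambda^{-1} e$ and the block-diagonal form of $\Res(\lambda, S)$, this determines the action of $\Res(\lambda, S)$ on both $e$ and $q$ completely. I would perform the first perturbation step by adjoining $q \otimes e$: here the relevant scalar is $\langle q, \Res(\lambda, S) e\rangle = \lambda^{-1}\langle q, e\rangle = 0$, because $q$ has vanishing zeroth component while $e$ is supported precisely there. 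As $0 \ne 1$, Proposition~\ref{prop:sherman-morrison-woodbury} shows that $\lambda$ is in the resolvent set of $A_1 := S + q \otimes e$ and supplies an explicit formula for $\Res(\lambda, A_1)$.

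It then remains to adjoin the second rank-$1$ operator $e \otimes q$, i.e.\ to check that $\langle e, \Res(\lambda, A_1) q\rangle \ne 1$. Inserting the formula for $\Res(\lambda, A_1)$ and using $\langle \one, \one\rangle = d$ together with $\sum_n d q_n = 1$, a short computation collapses this scalar to
\begin{align*}
	\langle e, \Res(\lambda, A_1) q\rangle = \frac{d}{\lambda}\sum_{n=1}^\infty \frac{q_n^2}{\lambda - 1 + q_n}.
\end{align*}
This is the step I expect to be the main obstacle, and it is exactly where the two remaining ingredients enter. Since $U$ is a nonempty union of subgroups of $\bbT$, it contains the identity, so $\lambda \ne 1$, and Proposition~\ref{prop:circles} (applied with $p = q_n \in (0,\tfrac12]$) gives $\modulus{\lambda - 1 + q_n} > q_n$, whence $\modulus{q_n^2/(\lambda - 1 + q_n)} < q_n$ for every $n$. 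Summing, and using $\modulus{\lambda} = 1$ and $\sum_n q_n = 1/d$, we obtain
\begin{align*}
	\modulus{\frac{d}{\lambda}\sum_{n=1}^\infty \frac{q_n^2}{\lambda - 1 + q_n}} < d \sum_{n=1}^\infty q_n = 1,
\end{align*}
so in particular this scalar is different from $1$. A second application of Proposition~\ref{prop:sherman-morrison-woodbury} then shows that $\lambda$ lies in the resolvent set of $A_1 + e \otimes q = T$, which concludes the argument.
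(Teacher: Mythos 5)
Your proposal is correct and follows essentially the same route as the paper: decompose $T = S + e\otimes q + q\otimes e$, apply the Sherman--Morrison formula twice starting from Proposition~\ref{prop:spectrum-of-S}, and bound the resulting scalar $\frac{1}{\lambda}\sum_n \frac{d\,q_n^2}{\lambda-1+q_n}$ in modulus by $1$ via Proposition~\ref{prop:circles}. The only difference is that you adjoin the two rank-one perturbations in the opposite order, which by the symmetry of the computation changes nothing; your explicit remark that $1 \in U$ forces $\lambda \neq 1$ is a small point the paper leaves implicit.
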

\begin{proof}
	From Proposition~\ref{prop:spectrum-of-S} we know 
	that $\lambda$ is in the resolvent set of $S$.
	Now, let us first show that $\lambda$ is in the resolvent set of $S + e \otimes q$,
	and let us also compute the resolvent of this operator at $\lambda$.
	
	The resolvent of $S$ at $\lambda$ can be written down in block diagonal form,
	and from this, we obtain $\langle e, \Res(\lambda,S) q \rangle = 0$.
	Thus, by Proposition~\ref{prop:sherman-morrison-woodbury},
	$\lambda$ is indeed in the resolvent set of $S + e \otimes q$, and we have
	\begin{align*}
		\Res(\lambda, S + e \otimes q) 
		= 
		\Res(\lambda, S) + \Res(\lambda, S) (e \otimes q) \Res(\lambda, S).
	\end{align*}
	Next, we perturb $S + e \otimes q$ by $q \otimes e$,
	and we apply Proposition~\ref{prop:sherman-morrison-woodbury} a second time
	in order to see that $\lambda$ is in the resolvent set of
	$T = S + e \otimes q + q \otimes e$: we have
	\begin{align*}
		\langle q, \Res(\lambda,S + e \otimes q) e \rangle 
		& =
		\langle q, \Res(\lambda, S) e \rangle 
		+
		\left\langle
			q, \Res(\lambda, S) (e \otimes q) \Res(\lambda, S) e
		\right\rangle
		\\
		& =
		\langle e, \Res(\lambda, S) e \rangle  \cdot  \langle q, \Res(\lambda, S) q \rangle
		=
		\frac{1}{\lambda}  \cdot  \langle q, \Res(\lambda, S) q \rangle;
	\end{align*}
	for the second equality we used that $\langle q, \Res(\lambda, S) e \rangle = 0$.
	
	We only need to show that the result of the preceding computation cannot be $1$,
	and to this end, it suffices to show that the modulus of 
	$\langle q, \Res(\lambda, S) q \rangle$ is strictly less than $1$.
	To see this, we once again use the block diagonal representation of $\Res(\lambda,S)$,
	together with the fact that $\Res(\lambda, (1-q_n)P)\one = \frac{1}{\lambda - 1 + q_n} \one$
	since $P \one = \one$.
	Thus, we obtain
	\begin{align*}
		\modulus{\langle q, \Res(\lambda, S) q \rangle}
		& = 
		\modulus{\sum_{n=1}^\infty \big\langle q_n \one \, , \, \Res(\lambda, (1-q_n)P) q_n \one \big\rangle }
		\\
		& =
		\modulus{\sum_{n=1}^\infty q_n^2 \frac{d}{\lambda - 1 + q_n}}
		\le \sum_{n=1}^\infty q_n d \, \frac{q_n}{\modulus{\lambda - 1 + q_n}}.
	\end{align*}
	The numbers $q_n d$ sum up to $1$, 
	and the numbers $\frac{q_n}{\modulus{\lambda - 1 + q_n}}$ are all strictly less than $1$
	according to Proposition~\ref{prop:circles}.
	Hence, $\modulus{\langle q, \Res(\lambda, S) q \rangle} < 1$,
	which shows that $\langle q, \Res(\lambda,S + e \otimes q) e \rangle \not= 1$.
	Therefore, $\lambda$ is in the resolvent set of $T$.
\end{proof}

\section{A $C_0$-semigroup version}
\label{section:semigroup-version}

In this section we present an analogous construction for the case of $C_0$-semigroups.
Throughout the section we freely make use of $C_0$-semigroup theory;
standard references for this topic include \cite{Pazy1983, EngelNagel2000}.
Perron--Frobenius type results for positive $C_0$-semigroups can, for instance,
be found in \cite[Chapters~B-III and~C-III]{Nagel1986} 
or in \cite[Chapters~12 and~14]{BatkaiKramarFijavzRhandi2017}.
As a motivation, we first observe the following property:

\begin{proposition}
	\label{prop:pnt-spec-semigroups}
	Let $(T(t))_{t \in [0,\infty)}$ be a bounded, positive and irreducible
	$C_0$-semi\-group on a Banach lattice $E$. 
	Then the set
	\begin{align*}
		\pntSpec(A) \cap i \bbR
	\end{align*}
	is either empty or an (additive) subgroup of $i\bbR$.
\end{proposition}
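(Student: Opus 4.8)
The plan is to show that the nonempty set $P := \pntSpec(A) \cap i\bbR$ is closed under subtraction, which is enough for it to be a subgroup of $i\bbR$ (then $0 = i\alpha - i\alpha \in P$, negatives lie in $P$, and sums appear via $i\alpha + i\beta = i\alpha - (-i\beta)$). First I would record the elementary equivalence that, for the generator $A$, one has $Af = i\alpha f$ with $f \neq 0$ if and only if $T(t)f = e^{i\alpha t}f$ for every $t \ge 0$; thus the elements of $P$ are exactly those $i\alpha$ admitting a common eigenvector of the whole semigroup with unimodular eigenvalues $e^{i\alpha t}$. A tempting shortcut would be to apply the classical single-operator theorem of Lotz (quoted in the introduction) to some $T(t_0)$, but this is not available, since irreducibility of the semigroup does not pass to the individual operators $T(t_0)$ (think of a rotation semigroup). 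I therefore mimic the Perron--Frobenius structure theory directly at the level of the semigroup.

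The first substantive step is to produce a strictly positive fixed vector and to pin down the modulus of peripheral eigenvectors. Given $i\alpha \in P$ with eigenvector $f$, positivity of the semigroup together with $\modulus{e^{i\alpha t}} = 1$ yields $\modulus{f} = \modulus{T(t)f} \le T(t)\modulus{f}$ for all $t$, so $\modulus{f}$ is subinvariant. Invoking the standard Perron--Frobenius facts for bounded positive irreducible semigroups -- in particular the existence of a strictly positive fixed functional $0 \ll \varphi \in E'$ with $T(t)'\varphi = \varphi$ -- I would upgrade this to an equality: from $\langle \varphi, T(t)\modulus{f} - \modulus{f}\rangle = 0$, the positivity $T(t)\modulus{f} - \modulus{f} \ge 0$, and $\varphi \gg 0$, one obtains $T(t)\modulus{f} = \modulus{f}$. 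Hence $u := \modulus{f}$ is a positive fixed vector; irreducibility forces it to be quasi-interior and the fixed space to be one-dimensional, so every peripheral eigenvector has modulus proportional to $u$. I expect this step -- assembling the strictly positive fixed vector and functional and the one-dimensionality of the fixed space -- to be the main obstacle, as it is precisely where irreducibility must be exploited in full and where the literature (e.g.\ Nagel's lecture notes on positive semigroups, \cite{Nagel1986}) does the heavy lifting.

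With $u$ fixed and suitably normalized, I would pass to the principal ideal $E_u$, which, with its gauge norm, is an $AM$-space with unit $u$ and hence lattice isometric to some $C(K)$ with $u$ corresponding to $\one$. Every peripheral eigenvector $f = u\, h_\alpha$ then corresponds to a \emph{unimodular} continuous function $h_\alpha$. The core of the argument is the rigidity identity
\begin{align*}
	T(t)\,(h_\alpha \cdot x) = e^{i\alpha t}\, h_\alpha \cdot T(t) x,
\end{align*}
valid for $x \in E_u$, which is established exactly as in the single-operator theorem of Lotz and Schaefer: the equality $\modulus{f} = u$ forces $T(t)$ to respect the multiplicative structure of the $C(K)$-model. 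Granting this, the subgroup property drops out. Since positive operators commute with conjugation, $u\,\overline{h_\beta}$ is an eigenvector for $-i\beta$ whenever $u\, h_\beta$ is one for $i\beta$, and the identity above then gives
\begin{align*}
	T(t)\,\big(u\, h_\alpha \overline{h_\beta}\big) = e^{i(\alpha-\beta)t}\, u\, h_\alpha \overline{h_\beta}.
\end{align*}
As $\modulus{u\, h_\alpha \overline{h_\beta}} = u \neq 0$, this vector is a genuine eigenvector, so $i(\alpha-\beta) \in P$. This closes $P$ under subtraction and completes the proof.
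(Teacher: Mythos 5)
Your overall route coincides with the paper's: both proofs start from an eigenvector $f$ of a peripheral eigenvalue, observe that $\modulus{f} = \modulus{T(t)f} \le T(t)\modulus{f}$, use this to produce a positive fixed functional of the dual semigroup, and then run the standard $C(K)$-rigidity argument for the subgroup property. The paper simply outsources that second half to \cite[Theorem~C-III-3.8(a)]{Nagel1986}, whereas you unpack it; your sketch of that part (upgrading subinvariance to invariance via a strictly positive dual fixed functional, passing to the AM-space $E_u \simeq C(K)$, the multiplicativity identity $T(t)(h_\alpha\cdot x) = e^{i\alpha t}h_\alpha\cdot T(t)x$, and closure under conjugation and products) is the standard argument and is essentially correct.

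The one genuine flaw is where you invoke ``the existence of a strictly positive fixed functional $0 \ll \varphi$ with $T(t)'\varphi = \varphi$'' as a \emph{standard fact for bounded positive irreducible semigroups}. That statement is false in this generality: if $(S(t))_{t \ge 0}$ is any stochastic irreducible semigroup, then $T(t) := e^{-t}S(t)$ is bounded, positive and irreducible, yet any positive dual fixed vector $\varphi$ satisfies $\norm{\varphi} = e^{-t}\norm{S(t)'\varphi} \le e^{-t}\norm{\varphi}$ for all $t$, hence $\varphi = 0$. The existence of $\varphi$ must instead be \emph{derived} from the super fixed vector $\modulus{f}$ that you already have in hand --- for instance by taking a weak$^*$ cluster point of Ces\`aro means of the dual orbit of a positive functional $\psi$ with $\langle \psi, \modulus{f}\rangle > 0$, and then using irreducibility to upgrade positivity of the resulting fixed functional to strict positivity. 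This is precisely the step the paper attributes to the argument of \cite[Lemma~V.4.8]{Schaefer1974}. Since your proof only ever uses $\varphi$ after the super fixed vector is available, the gap is local and repairable, but as written the justification for the key auxiliary object is incorrect.
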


For the single operator case, we quoted an analogous result from
\cite[Theorem~V.5.2 and Lemma~V.4.8]{Schaefer1974} in the previous section.
For $C_0$-semigroups, we found this result 
only under slightly different assumptions in the literature, 
so we include a brief argument here:

\begin{proof}[Proof of Proposition~\ref{prop:pnt-spec-semigroups}]
	Assume that $\pntSpec(A) \cap i \bbR$ is non-empty 
	and let $i\beta$ be a point in this set;
	let $z \in E$ denote a corresponding eigenvector.
	Then, by the positivity of the semigroup, 
	we have $\modulus{z} = \modulus{T(t)z} \le T(t) \modulus{z}$
	for each time $t \ge 0$. 
	So $\modulus{z}$ is a non-zero \emph{super fixed vector} of the semigroup
	and hence, we can use the same argument as in \cite[Lemma~V.4.8]{Schaefer1974}
	to conclude that the dual semigroup has a non-zero positive fixed vector.
	
	Under this assumption, the subgroup property of $\pntSpec(A) \cap i \bbR$
	is proved in \cite[Theorem~C-III-3.8(a)]{Nagel1986}
	or in \cite[Proposition~14.15(c)]{BatkaiKramarFijavzRhandi2017}.
\end{proof}

Similarly as in the previous section, we now show 
that the assertion of Proposition~\ref{prop:pnt-spec-semigroups} is not true, 
in general, for the spectrum instead of the point spectrum.
In order to keep the technicalities as simple as possible,
we will not prove the result in the same generality as Theorem~\ref{thm:main-result};
instead, we will restrict ourselves to the construction of a single semigroup
with a certain spectral property:

\begin{theorem}
	\label{thm:semigroup-case}
	There exists an $L^1$-space $E$ over a $\sigma$-finite measure space
	and an irreducible and stochastic $C_0$-semigroup on $E$
	such that the spectrum $\spec(A)$ of its generator $A$ satisfies
	\begin{align*}
		\spec(A) \cap i \bbR = i (-\infty,-1] \; \cup \; \{0\} \; \cup \; i [1,\infty).
	\end{align*}
\end{theorem}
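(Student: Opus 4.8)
The plan is to mimic the single-operator construction from Section~\ref{section:proof-of-main-result}, replacing the cyclic permutation matrix $P$ by an unbounded multiplication operator that produces a vertical line segment in the spectrum, and then gluing infinitely many copies together with a rank-$2$ perturbation to force irreducibility. To obtain the half-lines $i(-\infty,-1]$ and $i[1,\infty)$ together with the isolated point $0$, I would first design the building-block generators. Concretely, for each $n \in \bbN$ take an $L^1$-space over $\bbR$ (or over a bounded interval) and let the block generator be (a dissipative perturbation of) the generator of a shift or multiplication semigroup whose spectrum is a vertical segment such as $i[1,2^n]$ or $i[-2^n,-1]$; arranging these segments so that their union, over all blocks, covers exactly $i(-\infty,-1] \cup i[1,\infty)$. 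The point $0$ should appear because the stochastic, irreducible semigroup has a fixed vector, so $0 \in \spec(A)$ automatically.

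First I would set up the space $E$ as an $\ell^1$-type direct sum $E = \bbC \oplus \bigoplus_{n=1}^\infty E_n$, where each $E_n$ is an $L^1$-space carrying a positive, stochastic $C_0$-semigroup whose generator $B_n$ has $\spec(B_n) \cap i\bbR$ equal to a prescribed vertical segment, and such that the associated semigroups are uniformly bounded. The extra scalar summand $\bbC$ plays the role of the $f_0$-coordinate in Construction~\ref{construction:single-operator} and will be coupled to the blocks via rank-one terms $e \otimes q$ and $q \otimes e$ built from a summable sequence $(q_n)$, exactly as before; this coupling is what makes the whole semigroup irreducible while keeping it stochastic. The generator of the full semigroup would then be written as $A = S + e \otimes q + q \otimes e$, where $S$ is the block-diagonal generator with a $0$ in the scalar slot and the $B_n$ (suitably translated) in the remaining slots.

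Next I would reuse the spectral analysis verbatim in structure. For $\lambda \in i\bbR$ lying outside the target set, I would show $\lambda \in \rho(S)$ by checking that the resolvents $\Res(\lambda, B_n)$ are uniformly bounded in $n$ --- this is the semigroup analogue of Proposition~\ref{prop:spectrum-of-S} and requires the segments to be chosen with enough separation so that the distance from $\lambda$ to $\bigcup_n \spec(B_n)$ stays bounded below along the blocks accumulating near $\lambda$. Then I would invoke Proposition~\ref{prop:sherman-morrison-woodbury} twice, once for each rank-one perturbation, to pass from $\rho(S)$ to $\rho(A)$, checking that the relevant scalar quantities $\langle q, \Res(\lambda,S)q\rangle$ have modulus strictly less than $1$. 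The role of Proposition~\ref{prop:circles} would be taken over by an analogous elementary estimate adapted to the new spectral geometry of the segments.

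The hard part will be engineering the block generators $B_n$ so that three constraints hold simultaneously: each generates a positive, contractive (ideally stochastic) $C_0$-semigroup; the imaginary parts of their spectra tile $i(-\infty,-1]\cup i[1,\infty)$ with controlled overlaps; and, crucially, the resolvent bound analogous to Proposition~\ref{prop:spectrum-of-S} survives. The subtlety is that a vertical \emph{line segment} in the spectrum of a generator, unlike a finite group of eigenvalues, sits densely along $i\bbR$, so a point $\lambda \in i\bbR \setminus (i(-\infty,-1]\cup\{0\}\cup i[1,\infty))$, i.e. $\lambda \in i(-1,1)\setminus\{0\}$, must be kept at a uniform positive distance from \emph{every} block spectrum; I would arrange the segments to lie in $\{|\im \lambda| \ge 1\}$ precisely so that this gap $|\im\lambda| < 1$ is available, and then verify that the rank-$2$ perturbation cannot fill it in. Establishing the uniform resolvent estimate for the unbounded blocks, rather than for the compact matrix spectra of the previous section, is where the genuine work lies.
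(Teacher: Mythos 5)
Your overall architecture --- a scalar slot coupled to an $\ell^1$-direct sum of blocks by the rank-two perturbation $e\otimes q + q\otimes e$, uniform resolvent bounds on the block-diagonal part at points $i\beta$ with $0<\modulus{\beta}<1$, and two applications of Proposition~\ref{prop:sherman-morrison-woodbury} --- is exactly the paper's. The genuine gap is in the building blocks. You propose blocks $B_n$ generating positive, contractive semigroups on $L^1$-spaces whose spectra are prescribed vertical segments such as $i[1,2^n]$. Such blocks do not exist: by Greiner's cyclicity theorem for the boundary spectrum of bounded positive semigroups on $L^p$-spaces (the semigroup analogue of the Krieger--Lotz cyclicity result quoted in the introduction; see \cite[Chapter~C-III]{Nagel1986}), the set $\spec(B_n)\cap i\bbR$ would have to be imaginarily additively cyclic, so $i\in\spec(B_n)$ forces $ik\in\spec(B_n)$ for every $k\in\bbZ$ --- in particular $0$ and arbitrarily large multiples --- which is incompatible with a bounded segment avoiding $0$. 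The concrete candidates you name also fail for elementary reasons: multiplication semigroups $e^{itm(x)}$ with $m$ real-valued are not positive, the shift group on $L^1(\bbR)$ has generator spectrum all of $i\bbR$, the nilpotent shift on an interval has empty spectrum, and the rotation group on $L^1(\bbT)$ has spectrum $i\bbZ$ --- never a vertical segment.

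The paper's way around this is the one idea your proposal is missing: do \emph{not} put a continuum into any single block. Take $D$ the generator of the (positive, stochastic) rotation group on $L^1(\bbT)$, so $\spec(D)=i\bbZ$, and use blocks $\omega_n D - q_n$, where $(\omega_n)\subseteq[1,2]\cap\bbQ$ lists every rational in $[1,2]$ infinitely often and $q_n\downarrow 0$ with $\sum_n 2\pi q_n=1$. Each block has the discrete spectrum $-q_n+i\omega_n\bbZ$, which stays uniformly away from every $i\beta$ with $0<\modulus{\beta}<1$ (Proposition~\ref{prop:spectrum-of-B}; easy precisely because the block spectra are discrete and pushed slightly into the left half-plane), while each number $i\omega k$ with $\omega\in[1,2]\cap\bbQ$ and $k\in\bbN$ is an approximate eigenvalue of the direct sum because the frequency $\omega$ recurs infinitely often as $q_n\to 0$; since the spectrum is closed, the closure of these points --- namely $i[1,\infty)$, its conjugate, and $0$ --- lies in $\spec(A)$. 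The half-lines thus emerge only in the limit over the blocks, which is what reconciles the continuum in the spectrum with the uniform resolvent estimate in the gap. Without this device (or some substitute for your impossible segment blocks), your plan cannot be completed.
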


This theorem is -- in a negative sense -- relevant for the analysis of the long-term behaviour of
positive $C_0$-semigroups:

\begin{remark}
	If a $C_0$-semigroup is bounded and the spectrum of its generator $A$ intersects $i\bbR$
	in at most countably many points, this implies 
	-- under a certain ergodicity assumption --
	that the semigroup is asymptotically almost periodic;
	this is a version of the ABLV theorem, see for instance
	\cite[Theorem~5.5.5]{ArendtBattyHieberNeubrander2011}.
	So it is natural to ask for sufficient conditions of $\spec(A) \cap i\bbR$
	to be at most countable.
	
	If irreducibility of a positive $C_0$-semigroup implied that $\spec(A) \cap i\bbR$ is a group 
	then, for such a $C_0$-semigroup, it would suffice to know that at least one number in $i\bbR$
	is not a spectral value in order to conclude that $\spec(A) \cap i\bbR$ is at most countable
	(since all closed non-trivial subgroups of $i\bbR$ are countable).

	Theorem~\ref{thm:semigroup-case}, though, shows that such an argument cannot work,
	in general.
\end{remark}

Our construction of the semigroup generator $A$ in Theorem~\ref{thm:semigroup-case}
is quite similar to what we did in the previous section. 
Here are the details:

\begin{construction}
	\label{construction:semigroup}
	\emph{The space:}
	Endow the complex unit circle $\bbT$ with the Haar measure 
	that assigns the measure $2\pi$ to the whole space $\bbT$.
	We denote the norm on $L^1(\bbT)$ by $\norm{\argument}_1$, 
	and we set
	\begin{align*}
		E := \{f = (f_0, f_1, f_2, \dots): \; f_0 \in \bbC, \; f_1,f_2, \dots \in L^1(\bbT) \text{ and } \norm{f}_E < \infty\},
	\end{align*}
	where
	\begin{align*}
		\norm{f}_E := \modulus{f_0} + \sum_{n=1}^\infty \norm{f_n}_1.
	\end{align*}
	Clearly, $E$ is isometrically lattice isomorphic to the $L^1$-space
	over the $\sigma$-finite measure space
	$\{0\} \, \dot\cup \, \bbT \, \dot\cup \, \bbT \, \dot\cup \, \dots$.
	
	\emph{The generator:} 
	We specify our semigroup by defining its generator $A$. 
	Let $D: L^1(\bbT) \supseteq \dom{D} \to L^1(\bbT)$ denote 
	the generator of the shift semigroup on $L^1(\bbT)$;
	then the spectrum of $D$ equals $i\bbZ$. 
	
	Choose a sequence $(q_n)_{n \in \bbN} \subseteq (0,1]$ such that
	$\sum_{n=1}^\infty 2\pi q_n = 1$; 
	clearly, $q_n$ converges to $0$ as $n \to \infty$.
	In addition, let $(\omega_n)_{n \in \bbN} \subseteq [1,2] \cap \bbQ$
	be a sequence which contains each rational number in $[1,2]$ infinitely often.
	We first define a block diagonal operator $B: E \supseteq \dom{B} \to E$ as
	\begin{align*}
		B =
		\begin{pmatrix}
			-1 &                  &                  &        \\
			   & \omega_1 D - q_1 &                  &        \\
			   &                  & \omega_2 D - q_2 &        \\
			   &                  &                  & \ddots
		\end{pmatrix};
	\end{align*}
	its domain is given by
	\begin{align*}
		\dom{B} := 
		\big\{
			f = (f_0, f_1, f_2, \dots): \; 	& f_0 \in \bbC, \; f_1,f_2, \dots \in \dom{D} 
											\\
		                                	& \text{ and } 
		                                	  \sum_{n=1}^\infty \norm{D f_n}_1 < \infty
		\big\}.
	\end{align*}
	By standard perturbation theory, $B$ is the generator of a positive semigroup on $E$.
	Now, let $\one \in L^1(\bbT)$ denote the constant function with value $1$
	(it has norm $2\pi$)
	and consider the vectors
	\begin{align*}
		e = (1,0,0,\dots) 
		\qquad \text{and} \qquad 
		q = (0, q_1\one, q_2\one, \dots);
	\end{align*}
	we can interpret both of them as vectors in $E$ 
	and as vectors in the dual space $E'$.
	Similarly as in Construction~\ref{construction:single-operator}
	we define the operator $A: E \supseteq \dom{A} \to E$ by
	\begin{align*}
		A := B  +  e \otimes q  +  q \otimes e,
	\end{align*}
	with domain $\dom{A} := \dom{B}$.
\end{construction}

Clearly, $A$ generates a positive $C_0$-semigroup on $E$,
and a straightforward computation shows that the vector
\begin{align*}
	(1, \one, \one, \dots) \in E'
\end{align*}
is in the kernel of the dual operator $A'$
(one merely has to use that $\one \in \ker D'$).
Hence, the semigroup generated by $A$ is stochastic.

Moreover, one readily checks that the rank-$2$ operator 
$e \otimes q  +  q \otimes e$ is irreducible. 
Thus, the semigroup generated by $A$ is irreducible, too 
\cite[Proposition~C-III-3.3]{Nagel1986}.

So in order to obtain Theorem~\ref{thm:semigroup-case},
it only remains to compute the peripheral spectrum of $A$.
We start with the easier of both inclusions:

\begin{proposition}
	\label{prop:peripheral-spectrum-of-generator-easy}
	We have
	\begin{align*}
		\spec(A) \cap i \bbR \supseteq i (-\infty,-1] \cup \{0\} \cup i [1,\infty).
	\end{align*}
\end{proposition}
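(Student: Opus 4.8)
The plan is to show the three claimed pieces of $\spec(A) \cap i\bbR$ arise from the block structure of $B$, with the rank-$2$ perturbation not removing them. The point $0$ is the easiest: since the stochastic semigroup has the dual fixed vector $(1,\one,\one,\dots)$ exhibited above, $0$ is in the spectrum of $A'$ and hence of $A$. For the rays $i(-\infty,-1] \cup i[1,\infty)$, I would produce approximate eigenvectors supported on a single block, exactly as in the proof of Proposition~\ref{prop:approximate-eigenvalue}. The key observation is that the $n$-th diagonal block of $B$ is $\omega_n D - q_n$, whose spectrum is $\omega_n \cdot i\bbZ - q_n = i\omega_n \bbZ - q_n$, since $\spec(D) = i\bbZ$.

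First I would fix a target $i\beta$ with $\modulus{\beta} \ge 1$ and exploit the density built into the construction: the sequence $(\omega_n)$ contains every rational in $[1,2]$ infinitely often, and $q_n \to 0$. Given $\beta$ with $\modulus{\beta} \ge 1$, I would choose an integer $m$ and select blocks $n$ with $\omega_n$ close to $\modulus{\beta}/m$ (for a suitable $m$ making $\modulus{\beta}/m$ land in $[1,2]$) so that $i\omega_n m$ approximates $i\beta$ arbitrarily well; simultaneously $q_n$ becomes arbitrarily small. Concretely, for each such block the eigenfunction of $D$ for the eigenvalue $im$ is a character $\chi_m$ on $\bbT$, and placing $\chi_m$ in the $n$-th slot gives an element $z^{(n)} \in \dom{A}$ of norm $\norm{\chi_m}_1 = 2\pi$ satisfying
\begin{align*}
	B z^{(n)} = (i\omega_n m - q_n)\, z^{(n)}.
\end{align*}
As $n$ ranges over a subsequence with $\omega_n \to \modulus{\beta}/m$ and $q_n \to 0$, the scalar $i\omega_n m - q_n \to i\beta$, so $(A - i\beta) z^{(n)} = (i\omega_n m - q_n - i\beta)z^{(n)} + (e\otimes q + q\otimes e)z^{(n)}$ tends to $0$ in norm: the first term vanishes by the limit, and the perturbation term vanishes because $\langle e, z^{(n)}\rangle = 0$ and $\langle q, z^{(n)}\rangle = q_n \langle \one, \chi_m\rangle \to 0$. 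Hence $i\beta$ is an approximate eigenvalue and lies in $\spec(A)$.

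The main obstacle is purely bookkeeping: I must verify that every $\beta$ with $\modulus{\beta} \ge 1$ is actually reachable by a product $\omega \cdot m$ with $\omega \in [1,2]$ rational and $m \in \bbZ$. This is where the choice of the interval $[1,2]$ matters, since $\{\omega m : \omega \in [1,2],\, m \in \bbN\} = [1,\infty)$ (consecutive intervals $[m, 2m]$ overlap), and by taking $m$ negative one covers $(-\infty,-1]$; rationals are dense in each, and each value $\omega_n$ recurs infinitely often so one always finds blocks arbitrarily far out with $q_n$ small. Assembling these observations gives the two rays, and together with the point $0$ this yields the asserted inclusion. I would present the argument compactly, treating $i\beta$ with $\beta \ge 1$ and noting that $\beta \le -1$ follows by replacing $m$ with $-m$.
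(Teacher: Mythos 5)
Your argument is correct and essentially the same as the paper's: both obtain $0$ from stochasticity and realise the two rays as approximate eigenvalues via single-block vectors built from characters $\chi_m$, using that each rational $\omega \in [1,2]$ recurs infinitely often while $q_n \to 0$, and that $\{\omega m : \omega \in [1,2],\, m \in \bbN\} = [1,\infty)$. The only cosmetic difference is that the paper fixes a rational $r = \omega k$ exactly and then invokes closedness and conjugation-invariance of the spectrum, whereas you approximate an arbitrary real $\beta$ directly by letting $\omega_{n_j} \to \modulus{\beta}/m$; both routes are fine.
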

\begin{proof}
	As the semigroup generated by $A$ is stochastic, $0$ is a spectral value of $A$.
	Now, let $r$ be a rational number in $[1,\infty)$.
	It suffices to prove that $ir \in \spec(A)$
	(since the spectrum is closed and invariant under complex conjugation).
	Let $k \in \bbN$ denote the largest integer that is smaller than $r$.
	Then there exists a rational number $\omega \in [1,2)$ such that $\omega k = r$.
	As the number $ik$ is an eigenvalue value of the differential operator $D$,
	it follows that $i \omega k = ir$ is an eigenvalue of $\omega D$.
	
	Since the operator $\omega D$ occurs, up to the perturbations $-q_n$,
	infinitely many often in the definition of $B$,
	one can use the same argument as in the proof of
	Proposition~\ref{prop:approximate-eigenvalue} to show that $ir$ is an approximate eigenvalue of $A$.
\end{proof}

We still have to show the converse inclusion for the peripheral spectrum,
and to this end we proceed similarly as in the single operator case:
we start with the operator $B$, and then we use the Sherman--Morrison formula.

\begin{proposition}
	\label{prop:spectrum-of-B}
	Let $\beta \in (-1,1) \setminus \{0\}$.
	Then $i\beta$ is in the resolvent set of $B$.
\end{proposition}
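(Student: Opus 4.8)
The plan is to mirror the strategy of Proposition~\ref{prop:spectrum-of-S}: exploit the block-diagonal structure of $B$ to reduce the claim to two statements, namely (a) that $i\beta$ lies in the resolvent set of every diagonal block, and (b) that the resolvents of these blocks at $i\beta$ are uniformly bounded. Once both are established, the block-diagonal operator built from the blockwise resolvents is bounded on $E$ and coincides with $\Res(i\beta,B)$, so $i\beta \in \rho(B)$.

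For (a), the scalar block $-1$ is harmless since $i\beta + 1 \not= 0$. For the block $\omega_n D - q_n$ I would use that $\spec(D) = i\bbZ$ to obtain $\spec(\omega_n D - q_n) = -q_n + i\omega_n\bbZ$, every point of which has real part $-q_n < 0$. As $i\beta$ has vanishing real part, it avoids this spectrum, so $i\beta \in \rho(\omega_n D - q_n)$ and $\Res(i\beta,\omega_n D - q_n) = \tfrac{1}{\omega_n}\Res(\mu_n,D)$ with $\mu_n := \tfrac{q_n + i\beta}{\omega_n}$.

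The heart of the argument, and the step I expect to be the main obstacle, is the uniform bound (b). Unlike the single-operator case, the blocks now involve the unbounded generator $D$, and the naive half-plane estimate $\norm{\Res(\mu_n,D)} \le 1/\re\mu_n$ is useless, since $\re\mu_n = q_n/\omega_n \to 0$. Here I would exploit the periodicity of the rotation group $R(\argument)$ generated by $D$, which is isometric on $L^1(\bbT)$ and satisfies $R(2\pi) = \id$. Summing the Laplace-transform representation of the resolvent over successive periods yields, for $\re\mu > 0$, the closed form $\Res(\mu,D) = \frac{1}{1 - e^{-2\pi\mu}}\int_0^{2\pi} e^{-\mu s} R(s)\,\mathrm{d}s$; since each $R(s)$ is an isometry, this gives the estimate $\norm{\Res(\mu,D)} \le \frac{2\pi}{\modulus{1 - e^{-2\pi\mu}}}$ whenever $\re\mu \ge 0$ and $\mu \notin i\bbZ$.

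It then remains to bound $\modulus{1 - e^{-2\pi\mu_n}}$ away from zero uniformly in $n$. A direct computation gives $\modulus{1 - e^{-2\pi\mu_n}}^2 = (1 - e^{-2\pi\re\mu_n})^2 + 4 e^{-2\pi\re\mu_n}\sin^2(\pi\im\mu_n)$. Now $\im\mu_n = \beta/\omega_n$ ranges, because $\omega_n \in [1,2]$ and $\modulus{\beta} < 1$, over a compact subset of $(-1,1)\setminus\{0\}$; hence $\sin^2(\pi\im\mu_n)$ is bounded below by a positive constant depending only on $\beta$, while $\re\mu_n \in (0,1]$ keeps $e^{-2\pi\re\mu_n}$ bounded below by $e^{-2\pi}$. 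Together these yield a uniform lower bound on $\modulus{1 - e^{-2\pi\mu_n}}$, hence a uniform upper bound on $\norm{\Res(\mu_n,D)}$ and, since $\omega_n \ge 1$, on $\norm{\Res(i\beta,\omega_n D - q_n)}$. This establishes (b) and, combined with (a), completes the proof.
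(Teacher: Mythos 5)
Your proof is correct, and it follows the same overall skeleton as the paper's: reduce to the diagonal blocks and establish a uniform bound on the blockwise resolvents at $i\beta$. The difference lies entirely in how that uniform bound is obtained. The paper argues softly: since $\spec(D) = i\bbZ$, the compact rectangle $[0,1] + i\beta[\tfrac{1}{2},1]$ is contained in the resolvent set of $D$, so continuity of $\lambda \mapsto \Res(\lambda,D)$ gives a bound $M$ on that rectangle, and the points $\mu_n = \frac{q_n + i\beta}{\omega_n}$ all lie in it because $\omega_n \in [1,2]$ and $q_n \in (0,1]$. You instead derive the explicit representation $\Res(\mu,D) = \frac{1}{1-e^{-2\pi\mu}}\int_0^{2\pi} e^{-\mu s}R(s)\dx s$ from the $2\pi$-periodicity of the isometric rotation group and then bound $\modulus{1-e^{-2\pi\mu_n}}$ from below by hand. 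Both arguments are valid (your worry that the half-plane estimate $1/\re\mu_n$ degenerates is well founded, but the compactness argument sidesteps it without any explicit formula); your route buys explicit constants and would survive in situations where the relevant parameter set is not precompact in the resolvent set, at the cost of using a structural feature of $D$ (periodicity of the generated group) that the paper's argument never needs.
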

\begin{proof}
	Since the spectrum of $D$ equals $i\bbZ$, 
	we can find a number $M > 0$ such that
	\begin{align*}
		\norm{\Res(\lambda,D)} \le M
	\end{align*}
	for all $\lambda$ in the rectangle $[0,1] + i\beta [\frac{1}{2},1]$.
	Thus, we obtain
	\begin{align*}
		\norm{\Res(i\beta, \omega_n D - q_n)}
		=
		\frac{1}{\omega_n}\norm{\Res\big(\frac{i\beta + q_n}{\omega_n}, D\big)} 
		\le 
		M
	\end{align*}
	for all $n \in \bbN$ 
	(as we assumed the numbers $\omega_n$ to be in $[1,2]$
	and the numbers $q_n$ to be in $(0, 1]$). 
	Hence, the resolvents $\Res(i\beta, \omega_n D - q_n)$ are uniformly bounded as $n$ varies.
	Due to the block diagonal form of $B$ this implies that $i\beta$ is in the resolvent set of $B$.
\end{proof}

Now we can finally prove that the spectrum of $A$ is the claimed set,
again by employing the Sherman--Morrison type result
from Proposition~\ref{prop:sherman-morrison-woodbury} twice:

\begin{proposition}
	\label{prop:resolvent-set-on-imaginary-axis}
	Let $\beta \in (-1,1) \setminus \{0\}$.
	Then $i\beta$ is in the resolvent set of $A$.
\end{proposition}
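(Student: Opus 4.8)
The plan is to mirror the argument of Proposition~\ref{prop:resolvent-set-on-unit-circle} almost verbatim, with $B$ playing the role of $S$ and the imaginary axis playing the role of the unit circle. By Proposition~\ref{prop:spectrum-of-B}, $i\beta$ lies in the resolvent set of $B$, so I can apply the Sherman--Morrison formula (Proposition~\ref{prop:sherman-morrison-woodbury}) twice: first to pass from $B$ to $B + e \otimes q$, and then from $B + e \otimes q$ to $A = B + e \otimes q + q \otimes e$.

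For the first perturbation I would note, exactly as in the operator case, that $\Res(i\beta, B) q$ is supported away from the $0$-th coordinate by the block-diagonal form of $B$, so $\langle e, \Res(i\beta, B) q \rangle = 0 \neq 1$; hence $i\beta$ lies in the resolvent set of $B + e \otimes q$, and its resolvent is given by the rank-one update formula. For the second perturbation I must verify that $\langle q, \Res(i\beta, B + e \otimes q) e \rangle \neq 1$. Expanding the update formula and using that $\Res(i\beta, B) e = \frac{1}{i\beta + 1} e$ is supported in the $0$-th coordinate (so that it pairs to zero against $q$, while $\langle e, \Res(i\beta, B) e\rangle = \frac{1}{i\beta+1}$), this quantity collapses to $\frac{1}{i\beta + 1} \langle q, \Res(i\beta, B) q \rangle$. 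Everything thus reduces to understanding the single scalar $\langle q, \Res(i\beta, B) q \rangle$.

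To evaluate that scalar I would use that $\one \in \ker D$, so that $(\omega_n D - q_n)\one = -q_n \one$ and therefore $\Res(i\beta, \omega_n D - q_n) \one = \frac{1}{i\beta + q_n} \one$. Summing the block-diagonal contributions, and using that the pairing $\langle \one, \one \rangle$ equals $2\pi$ since the Haar measure has total mass $2\pi$, gives
\[
	\langle q, \Res(i\beta, B) q \rangle = \sum_{n=1}^\infty \frac{2\pi q_n^2}{i\beta + q_n}.
\]
The condition left to check then reads $\langle q, \Res(i\beta, B) q \rangle \neq i\beta + 1$.

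The main obstacle --- and the one place where the argument genuinely departs from the single-operator case --- is ruling out this equality. On the unit circle one simply bounded the modulus by $1$, but here $\modulus{i\beta + 1} = \sqrt{1 + \beta^2} > 1$, so a modulus estimate is not the natural tool. Instead I would compare \emph{real parts}: since $\re \frac{q_n}{i\beta + q_n} = \frac{q_n^2}{q_n^2 + \beta^2} < 1$ for $\beta \neq 0$, and $\sum_n 2\pi q_n = 1$, the real part of the series is $\sum_n 2\pi q_n \cdot \frac{q_n^2}{q_n^2 + \beta^2} < \sum_n 2\pi q_n = 1 = \re(i\beta + 1)$. This strict inequality of real parts already forces $\langle q, \Res(i\beta, B) q \rangle \neq i\beta + 1$, which completes the second application of Sherman--Morrison and shows that $i\beta$ is in the resolvent set of $A$. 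In this way the elementary real-part estimate takes over the role played by Proposition~\ref{prop:circles} in the circle case.
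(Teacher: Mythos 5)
Your proof is correct and follows the paper's argument step for step: the same two applications of the Sherman--Morrison formula, the same vanishing pairings $\langle e, \Res(i\beta,B)q\rangle = \langle q, \Res(i\beta,B)e\rangle = 0$, and the same explicit formula $\langle q, \Res(i\beta,B) q\rangle = \sum_n 2\pi q_n^2/(i\beta+q_n)$. The only divergence is at the very last step, and there your diagnosis is slightly off: you claim a modulus estimate is ``not the natural tool'' because $\modulus{i\beta+1} = \sqrt{1+\beta^2} > 1$, but that inequality works in your favour, not against you. The paper simply observes that $\modulus{\frac{q_n}{i\beta+q_n}} = \frac{q_n}{\sqrt{q_n^2+\beta^2}} < 1$ for $\beta \neq 0$, so that $\modulus{\langle q, \Res(i\beta,B)q\rangle} < 1$ by the same convex-combination argument as in the single-operator case, and then multiplies by $\modulus{\frac{1}{i\beta+1}} < 1$ to conclude that $\langle q, \Res(i\beta, B + e \otimes q)e\rangle$ has modulus strictly less than $1$ and hence cannot equal $1$; no analogue of Proposition~\ref{prop:circles} is even needed here, since $\modulus{i\beta+q_n} > q_n$ is immediate for $\beta \neq 0$. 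Your real-part comparison, $\re\langle q,\Res(i\beta,B)q\rangle = \sum_n 2\pi q_n \frac{q_n^2}{q_n^2+\beta^2} < 1 = \re(i\beta+1)$, is an equally valid way to rule out the equality $\langle q,\Res(i\beta,B)q\rangle = i\beta+1$ (the strictness survives the infinite sum because every term is strictly smaller than $2\pi q_n$ and the series converges); it is just not forced on you. Either way the proof goes through.
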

\begin{proof}
	We already know from Proposition~\ref{prop:spectrum-of-B}
	that $i\beta$ is in the resolvent set of $B$.
	Let us first note that it is also in the resolvent set of $B + e \otimes q$:
	indeed, we have $\langle e, \Res(i\beta,B) q \rangle = 0$,
	so Proposition~\ref{prop:sherman-morrison-woodbury} tells us that
	$i\beta$ is in the resolvent set of $B + e \otimes q$, 
	and that the resolvent of this operator at $i\beta$ is given by
	\begin{align*}
		\Res(i\beta, B + e \otimes q)
		=
		\Res(i\beta, B) + \Res(i\beta, B) (e \otimes q) \Res(i\beta, B).
	\end{align*}
	In order to prove that $i\beta$ is also in the resolvent set of
	$A = B + e \otimes q + q \otimes e$, we have to show,
	according to Proposition~\ref{prop:sherman-morrison-woodbury},
	that $\langle q, \Res(i\beta, B + e \otimes q) e \rangle \not= 1$.
	So let us compute
	\begin{align*}
		\langle q, \Res(i\beta, B + e \otimes q) e \rangle
		& =
		\langle q, \Res(i\beta, B) e \rangle
		+ 
		\langle q, \Res(i\beta, B) (e \otimes q) \Res(i\beta, B) e \rangle
		\\ 
		& =
		\langle e, \Res(i\beta, B) e \rangle  \cdot  \langle q, \Res(i\beta, B) q \rangle
		= 
		\frac{1}{i\beta + 1} \cdot  \langle q, \Res(i\beta, B) q \rangle;
	\end{align*}
	for the second equality we used that $\langle q, \Res(i\beta, B) e \rangle = 0$.
	In order to further simplify the expression, 
	we note that $D\one = 0$, 
	so $\Res(i\beta, \omega_n D - q_n) \one = \frac{1}{i\beta + q_n}\one$ for each $n \in \bbN$.
	Therefore,
	\begin{align*}
		\langle q, \Res(i\beta, B) q \rangle
		= 
		\sum_{n=1}^\infty 2\pi q_n \frac{q_n}{i\beta + q_n}.
	\end{align*}
	The numbers $2\pi q_n$ sum up to $1$ and 
	the modulus of $\frac{q_n}{i\beta + q_n}$ is strictly less than $1$
	for each $n$. 
	Hence, $\modulus{\langle q, \Res(i\beta, B) q \rangle} < 1$.
	
	The modulus of $\frac{1}{i\beta + 1}$ is also strictly less than $1$, so
	\begin{align*}
		\modulus{\langle q, \Res(i\beta, B + e \otimes q) e \rangle}
		=
		\modulus{\frac{1}{i\beta + 1}} \cdot \modulus{\langle q, \Res(i\beta, B) q \rangle}
		<
		1. 
	\end{align*}
	In particular, the number $\langle q, \Res(i\beta, B + e \otimes q) e \rangle$ is not $1$,
	so $i\beta$ is indeed in the resolvent set of $A$.
\end{proof}

\subsection*{Acknowledgements} 

It is my pleasure to thank Wolfgang Arendt 
for a very interesting discussion
that motivated the results presented in this note,
and to thank Ulrich Groh 
for pointing out Schaefer's result in \cite[Theorem~7]{Schaefer1968} to me.

%
%

\bibliographystyle{plain}
\bibliography{literature}

\end{document}